\newtheoremstyle{mystyle}
  {}
  {}
  {}
  {}
  {\bfseries}
  {}
  {\newline}
  {}
\newtheorem{thm}{Theorem}[section]
\newtheorem{lem}[thm]{Lemma}
\newtheorem{cor}[thm]{Corollary}
\newtheorem{prop}[thm]{Proposition}
\theoremstyle{mystyle}
\newtheorem{ex}{Example}
\newcommand{\C}{\mathbb{C}}
\newcommand{\Q}{\mathbb{Q}}
\newcommand{\Z}{\mathbb{Z}}
\newcommand{\graf}[1]{\left\lbrace#1\right\rbrace}
\DeclareMathOperator{\Gor}{Gor}
\DeclareMathOperator{\id}{Id}
\DeclareMathOperator{\h}{h}
\DeclareMathOperator{\Rk}{Rk}
\DeclareMathOperator{\Dim}{Dim}
\DeclareMathOperator{\Tr}{Tr}
\DeclareMathOperator{\Stab}{Stab}
\DeclareMathOperator{\Aut}{Aut}
\DeclareMathOperator{\PGL}{PGL}
\DeclareMathOperator{\age}{age}
\DeclareMathOperator{\SL}{SL}
\DeclareMathOperator{\Fix}{Fix}
\DeclareMathOperator{\Sing}{Sing}
\DeclareMathOperator{\e}{e}
\renewcommand{\P}{\mathbb{P}}
\renewcommand{\H}{H}
\renewcommand{\S}{S}
\title[]{Calabi-Yau quotients with terminal singularities}
\author{Filippo F. Favale}
\address[Filippo F. Favale]{Department of Mathematics, University of Trento, via Sommarive 14,
I-38123 Trento, Italy}
\email{filippo.favale@unitn.it}
\subjclass[2010]{14J32, 14J50}
\thanks{This research project was partially supported by FIRB 2012 ''Moduli spaces and Applications" and by GNSAGA-INdAM.}
\begin{document}

\begin{abstract}
\noindent In this paper we are interested in quotients of Calabi-Yau threefolds with isolated singularities. In particular, we analyze the case when $X/G$ has terminal singularities. We prove that, if $G$ is cyclic of prime order and $X/G$ has terminal singularities, then $G$ has order lower than or equal to $5$. 
\end{abstract}

\maketitle

\section*{Introduction}
A Calabi-Yau variety is a projective variety $X$ that has trivial canonical bundle and no non-zero holomorphic $p-$forms for $1\leq p\leq \Dim(X)-1$. Even for threefolds, the ones concerning us in this paper, very little is known about many geometric aspects. For example, a topological classification is very far to be understood and all possible sets of Betti numbers of a Calabi-Yau threefold are not known. Here we are interested in automorphisms of a Calabi-Yau variety. Most of the interesting results in this area are for particular families of Calabi-Yau varieties. For example, Wilson has proved that a Calabi-Yau manifold whose second Chern class is positive on the K\"ahler cone has a finite automorphism group (see \cite{Wil}). In \cite{Ogu}, it is shown that the same is true if $X$ is a Calabi-Yau threefold with Picard number $2$. In this paper, we would like to say something about groups $G$ of automorphism of Calabi-Yau threefold that give quotients with terminal singularities.
\vspace{4mm}

\noindent The main tool we will use is the holomorphic Lefschetz fixed point formula. If $g$ is an automorphism of a complex threefold and if the fixed points of $g$ are isolated, this formula, in its basic form, gives a relation between the traces of $g^*$ restricted on $\H^{0,k}(X)$ and some contributions that depend only on the fixed points and the local actions in a neighbourhood of the fixed points of $g$. 
First of all, we will use the Lefschetz formula to find conditions that have to be satisfied by prime-order automorphisms with isolated fixed points. In Theorem \ref{THM:TERM} we will see that, if we allow only terminal singularities for the quotient $X/\Z_p$, then $p$ has to be equal to $2,3$ or $5$. Also, we are able to tell precisely the number of the fixed points for each case. 
\vspace{4mm}

\noindent We will then focus on small automorphisms of order $2,3$ or $5$. When $g$ has order $2$, a description of the quotient $X/\Z_2$ for all the possible values of $\Dim(\Fix(g))$ is given. It is shown that $\Dim(\Fix(g))$ has always pure dimension and that the quotients with terminal singularities are also the ones with isolated singularities. When the order of $g$ is $3$, a discussion on the number of fixed points for $\Fix(g)$ of dimension $0$ is given. Basically, one can only have $9$ fixed points if $X/\Z_3$ has terminal singularities or an even number of fixed points otherwise, i.e. when $X/\Z_3$ has Gorenstein singularities and it is a singular Calabi-Yau threefold. If $g$ has order $5$, the number of fixed points for the case $\Dim(\Fix(g))=0$ is studied when $g$ is not symplectic, i.e., $g^*|_{\H^{0,3}(X)}\neq \id$. It is shown that the minimal number of fixed points is achieved only if $X/\Z_5$ is terminal.
\vspace{4mm}

\noindent Finally, we will present some examples of automorphisms giving cyclic quotients with terminal singularities to show that each $p\in\{2,3,5\}$ can, in fact, occur. An example of a terminal quotient $X/(\Z_2\oplus\Z_2)$ is given. Two quotient $X/G$ with Gorenstein non isolated singularities (one with $G$ of order $32$ and the other with respect to $\Z_2$) are also investigated.

\section{General facts}
\noindent Let $X$ be a smooth Calabi-Yau threefold and let $g$ be an automorphism  of $X$ such that $o(g)=p$ is prime. Denote by $G$ the group generated by $g$ and assume that $\Dim(\Fix(g))=0$, i.e., $g$ has only isolated fixed points. Under this assumption, we can apply the holomorphic Lefschetz fixed point formula to $g$, namely:
\begin{equation}
\label{EQ:HLF}
\sum_{P\in \Fix{g}}\frac{1}{\det(\id-d_Pg)}=\sum_{k=0}^3(-1)^k\Tr\left(g^*|_{\H^{0,k}(X)}\right),
\end{equation}
where $d_Pg$ is the differential map induced by $g$ on $T_PX$. We will investigate the restriction given by this identity under some assumptions on the types of singularities of $X/G$.
\vspace{2mm}

\noindent First of all, let's try to understand the contribution given by one point to the right hand side of (\ref{EQ:HLF}). In a neighbourhood of a fixed point $P$, the action of $g$ on $X$ can be described in terms of a linearization of $g$, i.e., the action of $d_Pg:T_PX\rightarrow T_{g(P)}X=T_PX$. The automorphism $g$ has finite order, so we can diagonalize $d_Pg$, thus obtaining
$$d_Pg\leftrightsquigarrow\begin{bmatrix} \omega^{a_1(P)} & 0 & 0\\ 0 & \omega^{a_2(P)} & 0 \\ 0 & 0 & \omega^{a_3(P)} \end{bmatrix},$$
where $\omega=\e^{2\pi i/p}$ and $0\leq a_i(P)\leq p-1$ are the exponents determined up to permutation. The local equations for the fixed locus are $z_i(1-\omega^{a_i(P)})=0$; if the fixed point is isolated $a_i(P)>0$. Call $s(P)=a_1(P)+a_2(P)+a_3(P)$. 
\vspace{2mm}

\noindent Recall that a Calabi-Yau threefold has $\h^{0,1}(X)=\h^{0,2}(X)=0$ and trivial canonical bundle. Hence there exists an everywhere non vanishing holomorphic $3-$form which will be denoted by $\eta$. Moreover, $\H^{0,3}(X)=\eta\cdot \C$, so the action of an automorphism $g$ on $\H^{0,3}(X)$ is simply the multiplication by an element of $\C^*$ that is a root of unity of order $o(g)$.
Therefore, the right-hand side of (\ref{EQ:HLF}) is simply $1-\Tr\left(g^*|_{\H^{0,3}(X)}\right)=1-\omega^{r}$ for some $r$. 
We will call an automorphism of a Calabi-Yau threefold such that $g^*|_{\H^{0,3}(X)}=\id$ a {\it symplectic automorphism}. The set $\S(X)$ of such automorphisms is easily proven to be a normal subgroup of $\Aut(X)$.
\vspace{2mm}

\noindent As the following lemma shows, if $X$ is a Calabi-Yau threefold and if we know the local action around a fixed point we can obtain information about the action of $g$ on $\H^{0,3}(X)$. 

\begin{lem}
\label{LEM:LTG}
If $X$ is a Calabi-Yau threefold and $g\in \Aut(X)$, then one of the following holds:
\begin{itemize}
\item $\Fix(g)$ is empty and $g\in \S(X)$;
\item $P\in \Fix(g)$ and there exists an action of $g$ on the stalk $\Omega_{X,P}^3$; the action of $g$ is the multiplication by $\det{d_Pg}=\omega^{s(P)}$.
\item One has $\Tr\left(g^*|_{\H^{0,3}(X)}\right)=\omega^{s(P)}$ for every $P\in \Fix(g)$.
\end{itemize}
\end{lem}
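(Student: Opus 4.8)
The plan is to split according to whether $g$ has a fixed point: the empty case is immediate from the Lefschetz formula (\ref{EQ:HLF}), while in the non-empty case one computes the local action at a fixed point and then transports it to $\H^{0,3}(X)$ by means of the nowhere-vanishing holomorphic $3$-form $\eta$.

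For the first item, if $\Fix(g)=\emptyset$ I would feed $g$ into (\ref{EQ:HLF}): the left-hand side is an empty sum, hence $0$, whereas the right-hand side reduces to $1-\Tr\left(g^*|_{\H^{0,3}(X)}\right)$ because $\h^{0,1}(X)=\h^{0,2}(X)=0$ and $\h^{0,0}(X)=1$. Therefore $\Tr\left(g^*|_{\H^{0,3}(X)}\right)=1$, i.e.\ $g\in\S(X)$. (If one objects that (\ref{EQ:HLF}) is stated for isolated fixed points, the hypothesis is vacuously satisfied here; one may also invoke the fixed-submanifold version of the Atiyah--Bott formula with empty fixed locus.)

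For the second and third items, take $P\in\Fix(g)$ and work in the linearising coordinates $z_1,z_2,z_3$ from the paragraph preceding the lemma, in which $g$ acts by $z_i\mapsto\omega^{a_i(P)}z_i$. Since $g(P)=P$, pullback of holomorphic top forms restricts to an endomorphism of the stalk $\Omega_{X,P}^3$, and $g^*(dz_1\wedge dz_2\wedge dz_3)=\det(d_Pg)\,dz_1\wedge dz_2\wedge dz_3=\omega^{s(P)}\,dz_1\wedge dz_2\wedge dz_3$, which is the second item. For the third, since $\eta$ is nowhere vanishing its germ generates $\Omega_{X,P}^3$ over $\O_{X,P}$; write $\eta=f\,dz_1\wedge dz_2\wedge dz_3$ near $P$ with $f(P)\neq0$ and let $\lambda\in\C^*$ satisfy $g^*\eta=\lambda\eta$ globally ($\lambda$ a root of unity since $g^{p}=\id$). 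Comparing with the previous identity and evaluating at the origin gives $f(0)\,\omega^{s(P)}=\lambda f(0)$, hence $\lambda=\omega^{s(P)}$. Finally, $\H^{0,3}(X)=\overline{\H^{3,0}(X)}$ and $g^*$ commutes with complex conjugation, so $g^*$ acts on $\H^{0,3}(X)$ by $\overline{\lambda}=\overline{\omega^{s(P)}}=\omega^{-s(P)}$; this is the third item. Since $\Tr\left(g^*|_{\H^{0,3}(X)}\right)$ is a single scalar equal to $\omega^{-s(P)}$ for each $P\in\Fix(g)$, the value of $s(P)$ modulo $p$ is moreover independent of $P$.

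The actual content is short; the only point calling for care is the matching of conventions, since this is precisely what produces the sign in the exponent $-s(P)$. Reading the action of $g$ on $\H^{0,3}(X)$ as $g^*$ on Dolbeault cohomology (equivalently, on $\overline{\H^{3,0}(X)}$) replaces the scalar $\lambda=\omega^{s(P)}$ by its complex conjugate; reading it via Serre duality, as the action on $\H^{0}(X,\Omega_X^3)^{\vee}$, replaces it by its inverse; for a root of unity these agree and equal $\omega^{-s(P)}$. A secondary, routine point is that the linearising coordinates at $P$ rest on Cartan's lemma for the finite group $\langle g\rangle$ — the same input that legitimises the diagonalisation of $d_Pg$ in the set-up — though one can also avoid it by carrying out the computation on the fibre $\Lambda^{3}T_P^{*}X$, on which $g$ acts by $\det(d_Pg)$ irrespective of coordinates.
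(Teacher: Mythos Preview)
Your proof is correct and follows essentially the same route as the paper: the first item is the Lefschetz formula with empty left-hand side, the second is the observation that $g^*$ on $\Omega_{X,P}^3$ is $\wedge^3(d_Pg)^*$, hence multiplication by $\det(d_Pg)$, and the third is deduced from the second. Your write-up is in fact more careful than the paper's on the third item: the paper simply calls it an ``easy consequence'' and elsewhere writes $\H^{0,3}(X)=\eta\cdot\C$, conflating $\H^{3,0}$ and $\H^{0,3}$, whereas you make the passage from the eigenvalue $\omega^{s(P)}$ on $\H^{3,0}(X)$ to $\omega^{-s(P)}$ on $\H^{0,3}(X)$ explicit via conjugation (equivalently Serre duality), which is exactly what accounts for the minus sign in the exponent.
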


\begin{proof}
If $\Fix(g)$ is empty, the holomorphic Lefschetz fixed point formula gives the equation $0=1-\omega^r$, so $r=0$, and this is equivalent to asking that $g \in \S(X)$.
Now, if there is a fixed point $P$, let's prove that the action of $g$ on the stalk of the canonical sheaf is given by $\det(d_Pg)$. If $\rho_P(g^*)$ is the map given by $g$ on the stalk over $P$ of the sheaf $\Omega_{X}^3$, we have the relation $\wedge^3 (d_Pg)^*=\rho_P(g^*)$. But $d_Pg$ is a linear automorphism of $T_PX$ whose dimension is $3$. This implies that $\wedge^3 (d_Pg)$ is the multiplication by $\det(d_Pg)$, thus proving the claim. The last part is an easy consequence of the second. 
\end{proof}

\noindent Let $X$ be a complex threefold and consider a fixed point $P$ of $g\in \Aut(X)$. The {\it age} of P with respect to the primitive root $\lambda$ of order $o(g)$ is $$\age(P,\lambda):=(a_1+a_2+a_3)/o(g),$$ where $\lambda^{a_i}$ are the eigenvalues of $d_Pg$ and $0\leq a_i\leq o(g)-1$. Recall that if $V$ is a vector space and $f:V\rightarrow V$ is linear, $f$ is a {\it quasi-reflection} if $\Rk(f-\id)=1$. A group $G$ acting on a complex manifold is said to be {\it small} if for every $g\in G$ and every $P\in \Fix(g)$ one has that $d_Pg:T_PX\rightarrow T_PX$ is not a quasi-reflection. This condition is equivalent to asking that $\Fix(G)$ has codimension at least $2$.
The following theorem recalls some well known facts about some types of singularities (see, for example, \cite{Mor}).
\begin{thm}
\label{THM:SUNTO}
Let $X$ be a complex threefold and consider a small group $G$ that acts on $X$. Call $\pi:X\rightarrow X/G$ the projection on the quotient and $G_P=\Stab_{G}(\{P\})$ the isotropy of $P$. Then $\Sing(X/G)=\Fix(G)/G$ and 
\begin{itemize}
\item $\pi(P)$ is a Gorenstein singularity if and only if $d_Pg\in  \SL(T_PX)$ for each $g\in G_P$;
\item $\pi(P)$ is a canonical singularity if and only if $\age(g,\lambda)\geq 1$ for each  primitive $\lambda$ and for each $g\in G_P$;
\item $\pi(P)$ is a terminal singularity if and only if $\age(g,\lambda)> 1$ for each primitive $\lambda$ and for each $g\in G_P$;
\item $\pi(P)$ is a terminal singularity if and only if $\det(d_Pg)$ is an eigenvalue of $d_Pg$ for each $g\in G_P$.
\end{itemize}
\end{thm}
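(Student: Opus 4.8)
The statement is local around a fixed point $P$, so the plan is to reduce to a linear model and then quote the classical criteria for quotient singularities. First I would apply Cartan's linearisation lemma: on a suitable $G_P$-invariant neighbourhood of $P$, where $G_P=\Stab_G(\{P\})$, the $G_P$-action is analytically conjugate to the linear action of $G_P$ on $T_PX\cong\C^3$ via $g\mapsto d_Pg$, so that the germ of $X/G$ at $\pi(P)$ is the germ at the origin of $\C^3/G_P$. I would also record at the outset that for $g\neq\id$ one has $d_Pg\neq\id$ (otherwise $g$ would fix a neighbourhood of $P$, hence all of $X$), so smallness forces $\Rk(d_Pg-\id)\geq 2$; in particular $G_P$ contains no quasi-reflection other than the identity. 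With this reduction the singular-locus description and the four bullet criteria all become well-known facts about linear quotient singularities, and the work is to quote and assemble them.

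For the description of $\Sing(X/G)$ the tool is the Chevalley--Shephard--Todd theorem: $\C^3/H$ is smooth at the origin if and only if $H$ is generated by pseudo-reflections (= quasi-reflections). Since $G_P$ contains none except $\id$, either $G_P=\{\id\}$ and $\pi$ is a local isomorphism at $P$, or $G_P\neq\{\id\}$ and $\pi(P)$ is a genuine singular point; hence $\Sing(X/G)=\Fix(G)/G$. For the Gorenstein criterion I would use Watanabe's theorem: for a small finite subgroup $H\subset\GL(3,\C)$, the ring of invariants is Gorenstein --- that is, $\C^3/H$ is Gorenstein --- if and only if $H\subset\SL(3,\C)$. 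The translation to the statement is exactly Lemma \ref{LEM:LTG}: $g$ acts on the line $\Omega^3_{X,P}$ by multiplication by $\det(d_Pg)=\omega^{s(P)}$, so a $G_P$-invariant nowhere-vanishing $3$-form near $P$ exists --- equivalently $\omega_{X/G}$ is locally trivial at $\pi(P)$ --- if and only if $\det(d_Pg)=1$ for every $g\in G_P$.

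For the canonical and terminal criteria I would invoke the Reid--Tai criterion rather than reprove it: on a resolution of $\C^3/G_P$ (toric when $G_P$ is cyclic, and in general after reduction to cyclic subgroups) the discrepancy of the monomial valuation attached to an element $g$ with eigenvalues $\lambda^{a_i}$ equals $\age(g,\lambda)-1$, and a theorem of Reid and Tai guarantees that these valuations already detect canonicity and terminality. This gives $\pi(P)$ canonical if and only if $\age(g,\lambda)\geq1$, and terminal if and only if $\age(g,\lambda)>1$, for every $g\in G_P$ and every primitive root $\lambda$ of order $o(g)$ --- a condition which, by re-indexing the eigenvalue exponents, is the same as requiring $\age(g)>1$ for every $g\in G_P$, $g\neq\id$. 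Finally, for the equivalence of the last two bullets in dimension $3$ I would argue by hand. Under the running hypothesis that the fixed loci are isolated, the eigenvalues of $g\neq\id$ are $\omega^{a_1},\omega^{a_2},\omega^{a_3}$ with $0<a_i<o(g)$ and $\omega$ primitive of order $o(g)$; then $\det(d_Pg)=\omega^{a_1+a_2+a_3}$ is one of these eigenvalues if and only if some $a_j+a_k\equiv 0\pmod{o(g)}$, hence (since $0<a_j+a_k<2\,o(g)$) if and only if $a_j+a_k=o(g)$, in which case $\age(g,\omega)=1+a_i/o(g)>1$. Conversely, if $\pi(P)$ is terminal then $\C^3/\langle g\rangle$ is a terminal isolated cyclic quotient threefold singularity, so by the Morrison--Stevens classification $g$ has type $\tfrac{1}{o(g)}(a,\,o(g)-a,\,c)$ up to reordering and multiplying the weights by a unit, and then $\det(d_Pg)=\omega^{c}$ is an eigenvalue.

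The substantive inputs --- Chevalley--Shephard--Todd, Watanabe's theorem, the Reid--Tai discrepancy criterion, and the classification of terminal cyclic quotient threefold singularities --- are precisely the ``well-known facts'' the theorem collects (see \cite{Mor} and the references therein), so the main obstacle is purely book-keeping: correctly matching the quantifier ``for each $g\in G_P$ and each primitive $\lambda$'' with Reid--Tai's ``for all $\id\neq g\in G_P$'', and keeping track of the standing assumption that the fixed loci are isolated, which is genuinely needed for the last bullet --- without it the equivalence breaks, for instance at the non-isolated singularity $\tfrac14(1,3,2)$, where $\det(d_Pg)$ is an eigenvalue of every group element yet the age of the order-two element equals $1$.
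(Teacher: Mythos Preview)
The paper does not give a proof of this theorem at all: it is stated as a summary of ``well known facts about some types of singularities'' with a reference to \cite{Mor}, and is used as a black box thereafter. Your proposal is therefore strictly more than what the paper provides. The outline you give --- Cartan linearisation to reduce to $\C^3/G_P$, Chevalley--Shephard--Todd for the description of the singular locus, Watanabe's theorem for the Gorenstein criterion, Reid--Tai for canonical/terminal, and the Morrison--Stevens classification for the eigenvalue reformulation in dimension three --- is the standard and correct way to assemble this statement, and your bookkeeping of the quantifiers (``for each $g\in G_P$ and each primitive $\lambda$'' versus ``for all $\id\neq g\in G_P$'') is accurate.

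Your closing observation is worth emphasising: the fourth bullet, as literally stated in the paper for an arbitrary small group, is not equivalent to the third without the additional hypothesis that the fixed points are isolated. Your example $\tfrac14(1,3,2)$ is a genuine counterexample: the group is small (the fixed locus of $g^2$ has codimension two), $\det(d_Pg^k)$ is an eigenvalue of $d_Pg^k$ for every $k$, yet the age of the order-two element $g^2$ equals $1$ and the quotient is canonical but not terminal. In the paper this causes no harm, since every application (Theorem~\ref{THM:TERM} and the discussion of orders $2,3,5$) is for cyclic groups of prime order with isolated fixed points, precisely the setting in which the Morrison--Stevens criterion applies verbatim; but you are right that, taken at face value, the hypothesis of the theorem is slightly weaker than what the fourth bullet actually requires.
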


\subsection{Quotients with terminal singularities}
\noindent From now on, $X$ is a Calabi-Yau threefold. Here we are interested in small cyclic groups of prime order which give a quotient with terminal singularities. The following calculation are inspired by the one done in \cite{Sob} for a Fano threefold.
\vspace{2mm}

\noindent Assume that $g$ is an automorphism of a Calabi-Yau threefold with a finite number of fixed point and with prime order $p$. If $P\in \Fix(g)$, consider the sum
\begin{equation}
S_n(P)=\sum_{\substack{0\le k_1,k_2,k_3\le p-1 \\ a_1(P)k_1+a_2(P)k_2+a_3(P)k_3\equiv_p n}} k_1k_2k_3,
\end{equation}
where $\omega^{a_i(P)}$ are the eigenvalues of $d_Pg$ and $\omega$ is a $p-$th  primitive root of unity.

\begin{thm}
\label{THM:CONTI}
Let $X$ be a Calabi-Yau threefold and $g$ an automorphism of prime order $p$ with only isolated fixed point. Let $0\leq r\leq p-1$ such that $$\Tr\left(g^*|_{\H^{0,3}(X)}\right)=\omega^r.$$ 
Then, the equality
\begin{equation}
\sum_{P\in\Fix(g)}\left(\frac{p^3(p-1)^3}{8}-pS_0(P)\right)=\left\lbrace
\begin{matrix}
p^4 & \mbox{ if } r\neq 0 \\
0 & \mbox{ if } r=0
\end{matrix}
\right.
\end{equation}
holds.
\end{thm}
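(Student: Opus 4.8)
The plan is to start from the holomorphic Lefschetz formula \eqref{EQ:HLF}, whose right-hand side we already know equals $1-\omega^r$ by the discussion preceding Lemma \ref{LEM:LTG}, and to rewrite each summand $\frac{1}{\det(\id-d_Pg)}$ on the left-hand side in a way that produces the combinatorial quantity $S_0(P)$. Fix a fixed point $x$ with local exponents $a_1,a_2,a_3$ (all nonzero since $x$ is isolated), so that the local contribution is $\prod_{i=1}^3 \frac{1}{1-\omega^{a_i}}$. The key identity I would use is the partial-fraction / geometric-series expansion
\[
\frac{1}{1-\omega^{a}} \;=\; \frac{1}{p}\sum_{k=0}^{p-1} \bigl(1-\tfrac{k}{?}\bigr)\cdots
\]
— more precisely, the standard fact that for a primitive $p$-th root $\omega$ and $0<a<p$ one has $\frac{1}{1-\omega^{a}}=\frac{1}{p}\sum_{k=0}^{p-1}\bigl(\frac{p-1}{2}-k\bigr)\,\omega^{ak}+\tfrac12$, or equivalently that $\frac{\omega^{a}}{1-\omega^{a}} = \frac{1}{p}\sum_{k=1}^{p-1} k\,\omega^{ak} - \frac{p-1}{2p}$ after summing the geometric series $\sum_k k\,t^k$. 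Multiplying the three expansions for $i=1,2,3$ together and collecting terms by the value of $a_1k_1+a_2k_2+a_3k_3 \bmod p$, the triple sum $\sum_{k_1,k_2,k_3} k_1k_2k_3\,\omega^{a_1k_1+a_2k_2+a_3k_3}$ reorganizes exactly into $\sum_{n=0}^{p-1} S_n(x)\,\omega^{n}$.

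The next step is to impose the constraint coming from Lemma \ref{LEM:LTG}: since $x\in\Fix(g)$, one has $\omega^{s(x)}=\omega^{-r}$, i.e. $s(x)=a_1+a_2+a_3\equiv -r \pmod p$. This is what lets us isolate a single Fourier coefficient. The idea is to evaluate (or rather, to extract the $\omega^{0}$-component of) the identity obtained above after multiplying through by a suitable power of $\omega$, or alternatively to sum the Lefschetz identity over the Galois conjugates $\omega\mapsto\omega^{j}$, $j=1,\dots,p-1$, and use that $\sum_{j=1}^{p-1}\omega^{jn} = p\,\delta_{n\equiv 0} - 1$. Carrying this through, the terms $S_n(x)$ with $n\not\equiv 0$ get weight $-1$ while $S_0(x)$ gets weight $p-1$; combined with the identity $\sum_{n=0}^{p-1} S_n(x) = \bigl(\sum_{k=0}^{p-1}k\bigr)^3 = \bigl(\tfrac{p(p-1)}{2}\bigr)^3 = \tfrac{p^3(p-1)^3}{8}$, this converts $(p-1)S_0(x)-\sum_{n\neq 0}S_n(x) = pS_0(x) - \tfrac{p^3(p-1)^3}{8}$ into the bracketed expression in the statement (up to sign). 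On the right-hand side, summing $1-\omega^{r}$ over the Galois conjugates gives $(p-1) - (\,p\,\delta_{r\equiv 0}-1\,) = p - p\,\delta_{r\equiv 0}$, which after clearing the overall factor of $p$ produces $p^4$ when $r\neq 0$ and $0$ when $r=0$. Tracking the constant terms $-\frac{p-1}{2p}$ from each of the three factors and the cross-terms requires care, and that bookkeeping — making sure every lower-order piece of the threefold product of partial-fraction expansions lands where it should — is the main obstacle; everything else is a finite Fourier-analysis manipulation over $\Z/p\Z$.

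I would organize the write-up so that the purely local computation (expressing $\det(\id-d_Pg)^{-1}$ in terms of the $S_n(x)$) is done once as a lemma, independent of the Calabi-Yau hypothesis, and then the global step just feeds in $s(x)\equiv -r$ and sums the Lefschetz identity across conjugates. One should double-check the degenerate prime $p=2$ separately, since there $\frac{p(p-1)}{2}=1$ and several sums collapse, but the formula is easily seen to hold there by direct inspection.
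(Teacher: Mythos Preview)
Your overall architecture is right --- expand each factor $\frac{1}{1-\omega^{a_i}}$ as a finite sum in powers of $\omega$, multiply, regroup by residue class to produce the $S_n(x)$, and then extract a rational relation from the resulting identity in $\Q(\omega)$ --- but you have made the execution harder than it needs to be, and two of your steps are detours.

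First, the partial-fraction identity. For a primitive $p$-th root $\lambda$ one has the clean formula
\[
\frac{1}{1-\lambda}\;=\;-\frac{1}{p}\sum_{k=1}^{p-1} k\,\lambda^{k},
\]
which you can check directly from $(1-\lambda)\sum_k k\lambda^k=-p$. There is no additive constant $\tfrac{1}{2}$ and no $\tfrac{p-1}{2p}$ offset; the centered versions you wrote are miscomputed. Using this clean identity, the triple product is exactly
\[
\frac{1}{\det(\id-d_Pg)}\;=\;-\frac{1}{p^3}\sum_{n=0}^{p-1} S_n(P)\,\omega^{n},
\]
with \emph{no} cross terms at all. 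The ``main obstacle'' you flag --- tracking lower-order pieces of the threefold product --- simply does not arise. This is the approach the paper takes.

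Second, the extraction step. You propose either (i) invoking the constraint $s(x)\equiv -r$ from Lemma~\ref{LEM:LTG} or (ii) summing over Galois conjugates. Option (i) is a red herring: that congruence plays no role in the proof and the theorem does not use it. Option (ii) works, but the paper's route is more direct: once the Lefschetz identity is rewritten as $\sum_{n} B_n\omega^n=0$ with rational $B_n$, the minimal polynomial of $\omega$ over $\Q$ forces all the $B_n$ to be equal, and the single scalar equation $\sum_{n} B_n = pB_0$ unwinds, together with $\sum_n S_n(P)=\bigl(\tfrac{p(p-1)}{2}\bigr)^3$, to the stated formula. Your Galois-averaging is essentially the trace-to-$\Q$ of the same equation, so it would reach the same endpoint, but with a little more bookkeeping. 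No separate check for $p=2$ is needed.
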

\begin{proof}
Starting from the Lefschetz fixed point formula, we have
\begin{equation}
\label{EQ:LEFHOL}
\sum_{P\in \Fix(G)}\frac{1}{\det(I-d_Pg)}=:\Lambda(X,g)=1-\omega^{r}
\end{equation} 
\vspace{2mm}

\noindent If, as before, we call $\omega$ a root of unity and we let $\omega^{a_i(P)}$ be the eigenvalues of $d_Pg$ one has
$$\sum_{P\in \Fix(G)}\frac{1}{\det(I-d_Pg)}=\sum_{P\in \Fix(G)}\frac{1}{(1-\omega^{a_1(P)})(1-\omega^{a_2(P)})(1-\omega^{a_3(P)})}.$$
If $\lambda$ is a primitive root of unity, the following relation holds 
$$\frac{1}{1-\lambda}=-\frac{1}{p}\sum_{k=1}k\lambda^{k}.$$
Every $\omega^{a_i(P)}$ is a primitive root. Indeed, the fixed locus has dimension $0$, hence $a_i\not\equiv_p 0$. We can then write
$$\sum_{P\in \Fix(G)}\frac{1}{\det(I-d_Pg)}=-\frac{1}{p^3}\sum_{P\in \Fix(G)}\prod_{i=1}^3\left(\sum_{k_i}k_i\omega^{k_ia_i(P)}\right)=$$
$$=-\frac{1}{p^3}\sum_{P\in \Fix(G)}\sum_{0\leq k_1,k_2,k_3\leq p-1}k_1k_2k_3\omega^{k_1a_1(P)+k_2a_2(P)+k_3a_3(P)}.$$
The coefficient of $\omega^n$ (seeing $\Q(\omega)$ as a $\Q$ vector space with the standard basis) given by $P$ is precisely $-\frac{1}{p^3}S_n(P)$; so we can rewrite the left hand side of the holomorphic Lefschetz formula in the following way:
$$-\frac{1}{p^3}\sum_{P\in \Fix(G)}\sum_{n=0}^{p-1}\left(\sum_{\substack{0\le k_1,k_2,k_3\le p-1 \\ a_1(P)k_1+a_2(P)k_2+a_3(P)k_3\equiv_p n}} k_1k_2k_3\right)\omega^n=$$
$$=-\frac{1}{p^3}\sum_{P\in \Fix(G)}\sum_{n=0}^{p-1}S_n(x)\omega^n.$$
We end up with the following formula:
\begin{equation}
\label{EQ:LAMBDA}
\Lambda(X,g)=\sum_{x\in \Fix(G)}\frac{1}{\det(I-d_xg)}=-\frac{1}{p^3}\sum_{P\in \Fix(G)}\sum_{n=0}^{p-1}S_n(P)\omega^n.
\end{equation}
From equations (\ref{EQ:LEFHOL}) and (\ref{EQ:LAMBDA}) we obtain
$$\sum_{P\in \Fix(G)}\sum_{n=0}^{p-1}S_n(P)\omega^n=p^3\omega^r-p^3$$
$$\sum_{n=0}^{p-1}\left(\sum_{P\in \Fix(G)}S_n(P)\right)\omega^n+p^3-p^3\omega^r=0.$$
It is useful to separate the case $r=0$ and $r\neq 0$. Call $B_n$ the coefficient of $\omega^n$ in the left hand side of the last equation, namely
$$\begin{array}{c|c}
r\neq0 & r=0 \\
B_n:=\sum_{P\in \Fix(G)}S_n(P)+\left\lbrace\begin{matrix}
0 & \mbox{ se } n\neq 0,r \\
p^3 & \mbox{ se } n=0 \\
-p^3 & \mbox{ se } n=r
\end{matrix}\right. & 
B_n:=\sum_{P\in \Fix(G)}S_n(P) \\
\end{array}$$
This relation $$\sum_{n=0}^{p-1}B_n\omega^n=0$$ is true if and only if the coefficients $B_n$ are all equal. From $B_0=B_1=\dots=B_{p-1}$ one has $B_n-B_0=0$ for all $1\leq n\leq p-1$. Hence 
$$\sum_{n=1}^{p-1}(B_n-B_0)=0\Longleftrightarrow \sum_{n=1}^{p-1}(B_n)-(p-1)B_0=0\Longleftrightarrow \sum_{n=0}^{p-1}(B_n)-pB_0=0$$
If we solve for $B_n$, we have
$$\begin{array}{c|c}
r\neq0 & r=0 \\
\sum_{P}\Bigl(\bigl(\sum_{n=0}^{p-1}S_n(P)\bigl)-pS_0(P)\Bigl)=p^4 &
\sum_{P}\Bigl(\bigl(\sum_{n=0}^{p-1}S_n(P)\bigl)-pS_0(P)\Bigl)=0.
\end{array}$$
The quantity $S_n(x)$ is the sum of the product $k_1k_2k_3$ for which $a_1(P)k_1+a_2(P)k_2+a_3(P)k_3\equiv_p n$; hence the sum of the $S_n(x)$ for $0\leq n\leq p-1$ is simply the sum of $k_1k_2k_3$ for $0\leq k_i\leq p-1$. 
This is equivalent to the third power of the sum of the first $p-1$ integers. In the end, this can be written as
$$\sum_{P}\Bigl(\sum_{n=0}^{p-1}S_n(P)-pS_0(P)\Bigl)=\sum_{P}\biggl(\sum_{0\leq k_i\leq p-1}k_1k_2k_3-pS_0(P)\biggl)=$$
$$=\sum_{P}\biggl(\frac{p^3(p-1)^3}{8}-pS_0(P)\biggl).$$
This is the statement of the Theorem.
\end{proof}

\begin{lem}
\label{LEM:CONTOSOB}
Let $p$ be a prime and let $1\leq a,b,c\leq p-1$ such that $$((a+b+c) \mod p)\in\graf{a,b,c}.$$ Then
$$\sum_{\substack{0\leq k_1,k_2,k_3\leq p-1 \\ ak_1+bk_2+ck_3\equiv_p 0}} k_1k_2k_3=\frac{p}{2}\left[\frac{p^2(p-1)^2}{4}-\frac{p(p-1)(2p-1)}{6}\right].$$
\end{lem}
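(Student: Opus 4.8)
The plan is to compute the sum $T := \sum_{ak_1+bk_2+ck_3\equiv_p 0} k_1k_2k_3$ by exploiting the hypothesis $(a+b+c \bmod p) \in \{a,b,c\}$, which by the last bullet of Theorem \ref{THM:SUNTO} means precisely that the point with exponents $(a,b,c)$ gives a terminal singularity; equivalently, after relabelling we may assume $a+b+c \equiv_p c$, i.e. $a+b \equiv_p 0$, so that $b \equiv_p p-a$ (note $a,b \neq 0$ since the point is isolated, hence $b = p-a$ as integers in the range $[1,p-1]$). The congruence $ak_1+bk_2+ck_3\equiv_p 0$ then becomes $a(k_1-k_2) + ck_3 \equiv_p 0$. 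Since $\gcd(a,p)=1$ we substitute and reduce to understanding, for each fixed value of $k_3$, the set of pairs $(k_1,k_2)$ with $k_1 \equiv_p k_2 + a^{-1} c k_3$ — a single linear condition — and summing $k_1 k_2$ over it, then weighting by $k_3$ and summing over $k_3$.

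The key steps, in order: (1) reduce to $a+b\equiv_p 0$ by the relabelling above and rewrite the congruence as $a(k_1-k_2)\equiv_p -ck_3$; (2) for fixed $k_3$, set $m \equiv_p -a^{-1} c k_3$ so the condition is $k_1 \equiv_p k_2 + m$, and compute $\Sigma(m) := \sum_{\substack{0\le k_1,k_2\le p-1\\ k_1\equiv_p k_2+m}} k_1 k_2$ as a function of $m \in \{0,1,\dots,p-1\}$; here $k_1$ is forced to be either $k_2+m$ (when $k_2+m \le p-1$) or $k_2+m-p$, so $\Sigma(m)$ splits into two polynomial sums in $k_2$ over complementary ranges — this is an elementary but slightly fiddly computation yielding a quadratic-in-$m$ closed form; (3) observe that as $k_3$ runs over $0,\dots,p-1$, the value $m \equiv_p -a^{-1}ck_3$ runs over a permutation of $0,\dots,p-1$ (since $\gcd(ac,p)=1$ — note $c \neq 0$ as the point is isolated), but the weight $k_3$ attached to each does not simply average out, so I must track the correspondence $k_3 \leftrightarrow m$ carefully; (4) assemble $T = \sum_{k_3=0}^{p-1} k_3 \,\Sigma(m(k_3))$ and simplify using the standard formulas $\sum_{j=1}^{p-1} j = \tfrac{p(p-1)}{2}$, $\sum_{j=1}^{p-1} j^2 = \tfrac{p(p-1)(2p-1)}{6}$, $\sum_{j=1}^{p-1} j^3 = \tfrac{p^2(p-1)^2}{4}$, to land on $\tfrac{p}{2}\bigl[\tfrac{p^2(p-1)^2}{4} - \tfrac{p(p-1)(2p-1)}{6}\bigr]$.

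The main obstacle I anticipate is step (3): the weight $k_3$ breaks the symmetry that would make the $m$-sum trivial, so rather than reindexing by $m$ I expect it is cleaner to keep $k_3$ as the outer variable and substitute the explicit closed form for $\Sigma(m(k_3))$, expanding it as a polynomial in $k_3$ after writing $m(k_3)$ as $k_3$-dependent residue — but the residue $-a^{-1}ck_3 \bmod p$ is only piecewise-linear in $k_3$, so the sum over $k_3$ naturally breaks according to whether $-a^{-1}ck_3$ has already wrapped around mod $p$. A cleaner route, which I would try first, is to avoid inverses entirely: substitute $k_1' = k_1 - k_2$ understood as an element of $\Z/p$, note that the map $(k_1,k_2)\mapsto(k_1',k_2)$ is a bijection of $(\Z/p)^2$, express $k_1 k_2 = \bigl((k_1'+k_2 \bmod p)\bigr) k_2$, and sum; then the constraint becomes the single equation $ak_1' + ck_3 \equiv_p 0$ in two variables $k_1', k_3$, which is genuinely one-dimensional and can be parametrized directly. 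Either way the computation is elementary; the care is entirely in handling the "mod $p$ wrap-around" terms without arithmetic slips, and in checking small cases (e.g. $p=2,3$) against the claimed formula.
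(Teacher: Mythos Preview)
Your approach is sound, but there is nothing in the paper to compare it to: the paper's ``proof'' of this lemma is a bare citation to Sobolev~\cite{Sob}, with no argument given.

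On the substance of your sketch, the reduction via $a+b\equiv_p 0$ is correct, and the closed form $\Sigma(m)=\frac{p(p-1)(2p-1)}{6}-\frac{p}{2}\,m(p-m)$ drops out of the split sum exactly as you describe. The obstacle you flag in step~(3)---that the weight $k_3$ is tied to the particular bijection $k_3\mapsto m=(-a^{-1}c)k_3\bmod p$---dissolves once you notice that $m\mapsto m(p-m)$ is invariant under $m\mapsto p-m$: the involution $k_3\mapsto p-k_3$ on $\{1,\dots,p-1\}$ sends $m(k_3)$ to $p-m(k_3)$ while replacing the weight $k_3$ by $p-k_3$, so averaging gives
\[
\sum_{k_3=0}^{p-1} k_3\,\Sigma\bigl(m(k_3)\bigr)
=\frac{p(p-1)(2p-1)}{6}\cdot\frac{p(p-1)}{2}
-\frac{p}{2}\cdot\frac{p}{2}\sum_{m=1}^{p-1} m(p-m),
\]
which is visibly independent of $a,c$ and simplifies to the stated value. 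This bypasses the piecewise wrap-around bookkeeping you were anticipating.
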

\begin{proof}
See \cite{Sob}.
\end{proof}

As a consequence of Lemma (\ref{LEM:CONTOSOB}), for every point $P\in \Fix(g)$ with 
$$((a_1(P)+a_2(P)+a_3(P)) \mod p)\in\graf{a_i(P)}$$
we have that $S_0(P)$ depends only on $p$, the order of the group.

\begin{thm}
\label{THM:TERM}
Assume that $X$ is a Calabi-Yau threefold and $g$ is an automorphism of prime order $p$ with at most isolated fixed points. Call $G:=<g>$ and $q$ the number of fixed point of $g$. If $X/G$ has at most terminal singularities, then one of the following holds:
\begin{itemize}
\item $G$ acts freely on $X$ ($q=0$) and the action is symplectic;
\item $G$ has fixed points and the action of $G$ is not symplectic; Moreover we have $p\in\graf{2,3,5}$.
\end{itemize}
If the second case occurs, $g$ has $16,9$ or $5$ fixed point if $p=2,3$ or $5$,  respectively.
\end{thm}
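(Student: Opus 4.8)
The plan is to distinguish the two cases according to whether $\Fix(g)$ is empty. If $q=0$, the first bullet of Lemma~\ref{LEM:LTG} immediately gives $g\in\S(X)$, i.e.\ the action is symplectic, which is the first alternative. So from now on I would assume $q\ge 1$ and show that $r\neq 0$ (non-symplectic action), that $p\in\{2,3,5\}$, and compute $q$ in each case.

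First I would record that $G=\langle g\rangle$ is small: since $p$ is prime and $\Fix(g)$ is isolated, $\Fix(g^j)=\Fix(g)$ for every $1\le j\le p-1$ (as $g^j$ generates $G$), so $\Fix(G)$ is isolated, of codimension $3$, and no $d_Pg^j$ is a quasi-reflection. Hence Theorem~\ref{THM:SUNTO} applies, and the hypothesis that $X/G$ has only terminal singularities forces $\big((a_1(P)+a_2(P)+a_3(P))\bmod p\big)\in\{a_1(P),a_2(P),a_3(P)\}$ for each $P\in\Fix(g)$. Lemma~\ref{LEM:CONTOSOB} then gives
$$S_0(P)=\frac{p}{2}\left[\frac{p^2(p-1)^2}{4}-\frac{p(p-1)(2p-1)}{6}\right],$$
the same value for every fixed point $P$.

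Next I would substitute this into Theorem~\ref{THM:CONTI}. A direct simplification yields the identity
$$\frac{p^3(p-1)^3}{8}-p\,S_0(P)=\frac{p^3(p^2-1)}{24},$$
a strictly positive integer for $p\ge 2$. Since each of the $q\ge 1$ summands on the left side of Theorem~\ref{THM:CONTI} equals this quantity, that left side is positive; hence the case $r=0$ (where it would equal $0$) is impossible. Therefore $r\neq 0$ — the action is not symplectic — and Theorem~\ref{THM:CONTI} reads
$$q\cdot\frac{p^3(p^2-1)}{24}=p^4,\qquad\text{equivalently}\qquad q(p^2-1)=24p.$$

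Finally, since $p$ is prime we have $\gcd(p^2-1,p)=1$, so $p^2-1$ must divide $24$; among primes this happens only for $p=2,3,5$ (giving $p^2-1=3,8,24$, all divisors of $24$, whereas $p\ge 7$ gives $p^2-1\ge 48$), and then $q=24p/(p^2-1)$ equals $16,9,5$ respectively, as claimed. The only computational content is the algebraic identity in the third paragraph and this last elementary divisibility argument; I do not expect any genuine obstacle once Theorems~\ref{THM:SUNTO} and~\ref{THM:CONTI} and Lemma~\ref{LEM:CONTOSOB} are available.
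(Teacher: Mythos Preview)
Your proof is correct and follows essentially the same route as the paper: apply Lemma~\ref{LEM:CONTOSOB} (invoking Theorem~\ref{THM:SUNTO} to justify its hypothesis) to make $S_0(P)$ constant, substitute into Theorem~\ref{THM:CONTI}, simplify to $q(p^2-1)=24p$, and read off the primes. Your divisibility argument at the end (using $\gcd(p,p^2-1)=1$ to force $p^2-1\mid 24$) and your positivity argument to rule out $r=0$ are slightly crisper than the paper's phrasing, but the underlying strategy is identical.
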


\begin{proof}
The quotient $X/G$ has at most terminal singularities, so $G$ is small because its fixed locus is either empty or it has dimension $0$. 
If $q=0$ then, by Lemma (\ref{LEM:LTG}), $G$ is symplectic and, by definition it acts freely on $X$.
Assume that there are fixed points. Using Lemma (\ref{LEM:CONTOSOB}) one has
$$\sum_{x\in\Fix(g)}\left(\frac{p^3(p-1)^3}{8}-pS_0(x)\right)=$$ $$=\sum_{x\in\Fix(g)}\left(\frac{p^3(p-1)^3}{8}-p\frac{p}{2}\left[\frac{p^2(p-1)^2}{4}-\frac{p(p-1)(2p-1)}{6}\right]\right)=$$
$$=q\left(\frac{p^3(p-1)^3}{8}-p\frac{p}{2}\left[\frac{p^2(p-1)^2}{4}-\frac{p(p-1)(2p-1)}{6}\right]\right)=q\frac{p^3(p^2-1)}{24}$$
which is equal to
$$
\left\lbrace
\begin{matrix}
p^4 & \mbox{ if } r\neq 0 \\
0 & \mbox{ if } r=0,
\end{matrix}
\right.$$
by Theorem (\ref{THM:CONTI}).
In the end
$$q\frac{(p^2-1)}{24}=
\left\lbrace
\begin{matrix}
p & \mbox{ if } r\neq 0 \\
0 & \mbox{ if } r=0
\end{matrix}
\right.$$
$G$ is a subgroup of $\S(X)$ if and only if $r=0$ but this implies $q=0$. So, if there are fixed points, the action of $G$ is not symplectic. If we assume to have fixed points (and hence singular points on the quotient), we have that their number is given by $$q=24p/(p^2-1).$$ The only values of $p$ for which $q$ is a positive integer are $2,3$ and $5$, for which $q$ is $16,9$ and $5$, respectively.
\end{proof}

\noindent As an easy consequence of the last theorem we have the following

\begin{cor}
Assume that $X$ is a Calabi-Yau threefold and that $G\leq \Aut(X)$ is a small group such that $X/G$ has at most terminal singularities. Then $|G|=2^a3^b5^c.$
\end{cor}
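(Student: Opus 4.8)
The plan is to reduce the statement to the prime‑order cyclic case already settled by Theorem \ref{THM:TERM}. Let $p$ be an arbitrary prime dividing $|G|$; by Cauchy's theorem there is $g\in G$ with $o(g)=p$, and I set $H=\langle g\rangle\le G$. It suffices to show $p\in\{2,3,5\}$. Since $G$ is small and $H\le G$, the group $H$ is small as well, so $\Sing(X/H)=\Fix(H)/H$ by Theorem \ref{THM:SUNTO}; moreover $\Fix(H)=\Fix(g)$ because $p$ is prime (every nontrivial power of $g$ generates $H$), and for $P\in\Fix(g)$ the isotropy group $\Stab_H(P)$ is all of $H$.

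The first real step is to show that $X/H$ again has at most terminal singularities. By the determinant criterion in Theorem \ref{THM:SUNTO}, terminality of $X/H$ at $\pi_H(P)$ says precisely that $\det(d_Pg)$ is an eigenvalue of $d_Pg$ (and the same statement for the powers of $g$, which follows automatically). To obtain this I would argue that $\pi_G(P)$ cannot be a smooth point of $X/G$: a smooth quotient singularity forces the stabilizer $G_P$ to be generated by quasi-reflections (Chevalley--Shephard--Todd), contradicting smallness of $G$ since $\id\ne g\in G_P$. Hence $\pi_G(P)$ is one of the terminal singularities of $X/G$, and the determinant criterion of Theorem \ref{THM:SUNTO} applied to the element $g\in G_P$ yields exactly the condition needed for $X/H$.

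Next I would check that $\Fix(g)$ is finite (possibly empty), so that Theorem \ref{THM:TERM} really applies: terminal threefold singularities are isolated, so $\Fix(G)/G=\Sing(X/G)$ is a finite set, and since the fibres of $\Fix(G)\to\Fix(G)/G$ are finite and $\Fix(g)\subseteq\Fix(G)$, the set $\Fix(g)$ is finite. (Equivalently, a one-dimensional component of $\Fix(g)$ would, at a general point, give $d_Pg$ with eigenvalues $\omega^{a},\omega^{-a},1$ --- the only shape compatible with the determinant criterion when $1$ is an eigenvalue --- which is a canonical but non-terminal singularity, a contradiction.) With these two points in hand, Theorem \ref{THM:TERM} applied to $g$ leaves only two possibilities: either $\Fix(g)\ne\emptyset$, in which case $p\in\{2,3,5\}$ and we are done for this prime; or $\Fix(g)=\emptyset$, in which case $H$ acts freely and symplectically and $X/H$ is a smooth Calabi--Yau threefold.

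The main obstacle is precisely this last, free, case, since Theorem \ref{THM:TERM} places no restriction on the order of a free action: to conclude $p\in\{2,3,5\}$ one must rule out a free action of $\Z_p$ with $p\ge 7$ on a Calabi--Yau threefold. I would handle this by passing to a Sylow $p$-subgroup $P_p\le G$ (again small, with $X/P_p$ terminal by the same descent argument as above), taking a central subgroup $Z\cong\Z_p$ of $P_p$ --- which by the previous paragraph must act freely --- and observing that $P_p/Z$ acts on the smooth Calabi--Yau threefold $X/Z$, small (because $X\to X/Z$ is étale, so quasi-reflections would lift to quasi-reflections of $G$), with terminal quotient $(X/Z)/(P_p/Z)=X/P_p$. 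An induction on $|G|$ then forces $P_p=Z\cong\Z_p$, so everything reduces to the single residual assertion that $\Z_p$ with $p\ge 7$ has no free action on a Calabi--Yau threefold; establishing (or citing) this is where the real work lies, while the steps sketched above are essentially bookkeeping on top of Theorem \ref{THM:TERM}.
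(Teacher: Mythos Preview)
The paper offers no proof at all, recording the corollary merely as ``an easy consequence'' of Theorem~\ref{THM:TERM}. Your argument is already more careful than this, and the part dealing with elements that do have fixed points is correct: for $g\in G$ of prime order $p$ with $\Fix(g)\ne\emptyset$, smallness of $G$ together with Chevalley--Shephard--Todd forces $\pi_G(P)\in\Sing(X/G)$ for every $P\in\Fix(g)$; the determinant criterion in Theorem~\ref{THM:SUNTO} applied to $g\in G_P$ then shows that $X/\langle g\rangle$ is terminal at $\pi_{\langle g\rangle}(P)$, and $\Fix(g)$ is finite because terminal threefold singularities are isolated. Theorem~\ref{THM:TERM} then yields $p\in\{2,3,5\}$ for such $g$.

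But you have correctly located a genuine gap that the paper simply does not address: when $\langle g\rangle$ acts freely, Theorem~\ref{THM:TERM} gives no information about $p$. The ``residual assertion'' to which your induction reduces --- that no Calabi--Yau threefold admits a free $\Z_p$-action for $p\ge 7$ --- is not a consequence of anything in the paper, and the method here cannot touch it: since $\chi(\O_X)=h^{0,0}-h^{0,1}+h^{0,2}-h^{0,3}=1-0+0-1=0$ for a Calabi--Yau threefold, the holomorphic Lefschetz formula (the sole engine behind Theorem~\ref{THM:TERM}) collapses to the tautology $0=0$ for a fixed-point-free automorphism and places no constraint whatsoever on its order. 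So either the corollary is meant to carry a tacit extra hypothesis (e.g.\ that every prime-order element of $G$ has a fixed point), or it requires input of a completely different nature. Your reduction is sound, but the base case you have isolated is not merely ``where the real work lies'': it is a statement that the tools of this paper cannot establish, and as written the corollary remains unproved.
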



\section{Automorphisms of order $p\in \{2,3,5\}$}

\noindent In Section (\ref{SE:EX}) we will give some examples of Calabi-Yau threefolds with quotients having terminal singularities. Now, we will analyse the cases for which $p\in\{2,3,5\}$.

\subsection{Small involutions}
\noindent Here we are interested in small involutions. For this case, we will not restrict to the case of fixed locus of dimension $0$.

\begin{prop}
\label{PROP:INVDIM1}
Let $X$ be a Calabi-Yau threefold and let $g$ be a small involution with fixed points. The following are equivalent:
\begin{enumerate}
\item $g$ is symplectic;
\item $\Fix(g)$ contains a curve;
\item $\Fix(g)$ is smooth of pure dimension $1$.
\end{enumerate}
\end{prop}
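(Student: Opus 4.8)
The plan is to exploit the local structure of a small involution near a fixed point, which is extremely rigid: at a fixed point $P$, the differential $d_Pg$ is an involution of $T_PX\cong\C^3$, so it is diagonalizable with eigenvalues in $\{1,-1\}$, and the ``small'' hypothesis rules out exactly one eigenvalue being $-1$ (that would be a quasi-reflection). Hence only two local models occur: type $(-1,-1,-1)$, where $P$ is an isolated fixed point, and type $(1,1,-1)$ — wait, that is excluded — so in fact the only possibilities are $(-1,-1,-1)$ (isolated) and $(1,-1,-1)$ (fixed locus is locally a smooth curve, the $+1$-eigenline). In particular $\Fix(g)$ is automatically smooth, and each connected component has pure dimension $0$ or $1$. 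This already gives the equivalence of a curve being present with the fixed locus being smooth of pure dimension $1$, provided we can rule out mixed-dimensional $\Fix(g)$; I will handle that together with the symplectic statement.

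The core of the argument is to connect the local types to the action on $\eta\in\H^{0,3}(X)$. By Lemma \ref{LEM:LTG}, for any fixed point $P$ the action of $g^*$ on $\H^{0,3}(X)$ equals $\det(d_Pg)^{-1}$, so it is $(-1)^{(\text{number of }-1\text{ eigenvalues at }P)}$. Thus $g$ is symplectic iff every fixed point has an even number of $-1$'s in its local model, i.e.\ iff every fixed point is of type $(1,-1,-1)$, i.e.\ iff $\Fix(g)$ is everywhere $1$-dimensional; and $g$ is non-symplectic iff every fixed point is of type $(-1,-1,-1)$, i.e.\ iff $\Fix(g)$ is everywhere $0$-dimensional. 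Since the value of $g^*$ on $\H^{0,3}(X)$ is a single global scalar, the local type must be constant across all of $\Fix(g)$ — this is the observation that simultaneously forbids mixed dimensions and pins down the dichotomy. So I would organize the proof as: (1) establish the two local normal forms from the small hypothesis; (2) note via Lemma \ref{LEM:LTG} that the parity of the local model is read off the global scalar $g^*|_{\H^{0,3}}$, hence is constant; (3) conclude $(1)\Leftrightarrow(3)$ directly, and $(3)\Rightarrow(2)$ trivially; (4) for $(2)\Rightarrow(3)$, if a curve component exists then at its points the local model is $(1,-1,-1)$, the global scalar is $+1$, so $g$ is symplectic, so by the constancy every component is $1$-dimensional and smooth.

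The main obstacle, such as it is, is step (2)'s use of Lemma \ref{LEM:LTG} in the case $\Dim(\Fix(g))=1$: the holomorphic Lefschetz formula in the form \eqref{EQ:HLF} was stated for isolated fixed points, so one must invoke the version of the trace computation that applies to a fixed locus of positive dimension — but the relevant content we actually need, namely that $g^*$ acts on the stalk $\Omega^3_{X,P}$ by $\det(d_Pg)$, is the purely local second bullet of Lemma \ref{LEM:LTG} and does not need the global formula at all; the global scalar on $\H^{0,3}(X)$ is then forced because $\eta$ is a nowhere-vanishing global section and its germ at $P$ is scaled by $\det(d_Pg)$, so $g^*\eta = \det(d_Pg)\,\eta$ globally. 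A minor secondary point to state carefully is that when $\Fix(g)$ is $1$-dimensional the curve need not be connected or irreducible, but pure dimension $1$ and smoothness are still immediate from the local model holding at every point. No genuinely hard estimate or delicate argument is involved; the proposition is essentially a bookkeeping consequence of the eigenvalue constraint plus the rigidity of the global $3$-form.
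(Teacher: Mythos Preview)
Your proof is correct and follows essentially the same route as the paper: classify the two possible local models $(1,-1,-1)$ and $(-1,-1,-1)$ for a small involution, then invoke Lemma~\ref{LEM:LTG} to see that $\det(d_Pg)$ equals the single global scalar $g^*|_{\H^{0,3}(X)}$, forcing the local type to be constant across $\Fix(g)$. You are in fact more careful than the paper in noting that only the purely local second bullet of Lemma~\ref{LEM:LTG} is needed (so the isolated-fixed-point hypothesis of \eqref{EQ:HLF} is irrelevant here), and in spelling out all three implications rather than just $(1)\Rightarrow(3)$.
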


\begin{proof}
If $g\in \S(X)$, then there exist local coordinates around a fixed points $P$ such that $d_Pg$ acts as
$$(z_1,z_2,z_3)\mapsto (z_1,-z_2,-z_3).$$
The fixed locus has local equation $z_2=z_3=0$, and so is smooth at $P$ and the component containing $P$ has dimension $1$. To complete the proof, one can use Lemma \ref{LEM:LTG} and see that the same description is true near all the fixed points of $g$.
\end{proof}

\noindent Quotients of the form $X/G$, where $G=<g>$ with $g$ a small involution, are of three types and they are described by their fixed locus.
\vspace{2mm}

\begin{prop}
\label{PROP:INVCLASS}
Let $g$ be a small involution on a Calabi-Yau threefold $X$. Call $G$ the cyclic group generated by $g$. Then one of the following holds:
\begin{itemize}
\item $\Fix(g)$ is empty, $g$ is symplectic and $X/G$ is a smooth Calabi-Yau threefold.
\item $\Dim(\Fix(g))=0$, $g$ is not symplectic and $X/G$ has precisely $16$ singular points that are all terminal.
\item $\Dim(\Fix(g))=1$, $g$ is symplectic, $X/G$ is a singular Calabi-Yau threefold whose singular locus has pure dimension $1$.
\end{itemize}
\end{prop}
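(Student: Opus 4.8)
The plan is to split into cases according to $\Dim(\Fix(g))$, which by smallness of $g$ can only be ``$\Fix(g)=\emptyset$'', $0$, or $1$ in a threefold, and to read off $X/G$ in each case, using Proposition \ref{PROP:INVDIM1} to attach the symplectic/non\-symplectic dichotomy to this list.

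In the first case, $\Fix(g)=\emptyset$: here $g$ acts freely, so $\pi:X\to X/G$ is an \'etale double cover and $X/G$ is a smooth projective threefold. Lemma \ref{LEM:LTG} gives $g\in\S(X)$, so the nowhere-vanishing holomorphic $3$-form $\eta$ is $g$-invariant and descends to a nowhere-vanishing section of $K_{X/G}$, while $\h^{0,1}(X/G)=\h^{0,2}(X/G)=0$ because $H^{0,k}(X/G)=H^{0,k}(X)^{G}$. Hence $X/G$ is a smooth Calabi--Yau threefold, which is the first bullet.

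In the second case, $\Dim(\Fix(g))=0$: at an isolated fixed point $P$ the eigenvalues of $d_Pg$ are $\pm1$, and the local equations $z_i(1-(-1)^{a_i(P)})=0$ of the fixed locus force all of them to be $-1$ (an eigenvalue $+1$ would produce a fixed curve through $P$), so $d_Pg=-\id$ and the local model of $X/G$ at $\pi(P)$ is $\C^{3}/\langle-\id\rangle$. Then $\det(d_Pg)=-1$ is an eigenvalue of $d_Pg$, so Theorem \ref{THM:SUNTO} makes every point of $\Sing(X/G)=\Fix(g)/G$ terminal, and Proposition \ref{PROP:INVDIM1} makes $g$ non\-symplectic. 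Theorem \ref{THM:TERM} now applies with $p=2$ and a non\-symplectic action with fixed points, forcing $q=16$; since $g$ fixes $\Fix(g)$ pointwise, $\pi$ is injective on $\Fix(g)$, so $\Sing(X/G)$ consists of exactly $16$ terminal points — the second bullet.

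In the third case, $\Dim(\Fix(g))=1$: Proposition \ref{PROP:INVDIM1} gives that $g$ is symplectic and $\Fix(g)$ is smooth of pure dimension $1$, with local model $(z_1,z_2,z_3)\mapsto(z_1,-z_2,-z_3)$, so transverse to $\Fix(g)$ the quotient looks like $\C\times(\C^{2}/\langle-\id\rangle)$, an $A_1$ — hence Gorenstein canonical — singularity. As in the first case, $g$-invariance of $\eta$ trivializes $K_{X/G}$ (now a genuine line bundle) and kills $\h^{0,1}$ and $\h^{0,2}$, so $X/G$ is a singular Calabi--Yau threefold, and $\Sing(X/G)=\Fix(g)/G\cong\Fix(g)$ has pure dimension $1$ — the third bullet. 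I expect the only delicate point to be the ``Calabi--Yau'' bookkeeping in the singular case: one should make explicit that $K_{X/G}$ is a line bundle (equivalently that the quotient singularities are Gorenstein) before descending $\eta$, and that $H^{0,k}$ of the singular quotient is still controlled by the invariants $H^{0,k}(X)^{G}$ for $k=1,2$; the stratification issue — that nothing lower-dimensional is hiding inside $\Fix(g)$ when $\Dim(\Fix(g))=1$ — is already handled by Proposition \ref{PROP:INVDIM1}.
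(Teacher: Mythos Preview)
Your proof is correct and follows essentially the same approach as the paper: the same case split on $\Dim(\Fix(g))$, the same identification of the local action in the isolated case, and the same appeals to Proposition~\ref{PROP:INVDIM1} and Theorem~\ref{THM:TERM}. The paper is terser (for instance it deduces terminality from $(1+1+1)\equiv_2 1$ via the age criterion rather than the determinant--eigenvalue criterion, and does not spell out the Calabi--Yau verification for $X/G$ in the free and one-dimensional cases), but the argument is the same.
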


\begin{proof}
If $g$ has empty fixed locus, the action of $g$ is free, proving the first part.
If the fixed locus has dimension $0$, the eigenvalues of $d_Pg$ are all equal to $-1$ for each $P\in\Fix(g)$. This implies that $g$ isn't symplectic. Moreover, because $(1+1+1) \equiv_2 1$, all the fixed points of $g$ gives terminal singularities on the quotients. By Theorem (\ref{THM:TERM}) the fixed points are $16$ and each of them corresponds to a singular point.
Finally, if $\Fix(g)$ has dimension $1$, $X/G$ is a normal projective threefold with canonical and Gorenstein singularities, i.e., it is a singular Calabi-Yau threefold\footnote{They are interesting because a crepant resulution always exists and it is a smooth Calabi-Yau.}.  
\end{proof}

%


\subsection{Automorphism of order three with isolated fixed points}

Assume that $g$ is an automorphism of order $3$ with isolated fixed points on a Calabi-Yau threefold. Call $\lambda$ a primitive root of unity of order $3$. By Lemma (\ref{LEM:LTG}), for each fixed point $P$, $d_Pg$ has the same determinant. There are three possible case, namely $\det(d_Pg)=1,\lambda$ and $\lambda^2$. Recall that if $P$ is an isolated fixed point; hence $a_i(P)\neq 0$ for all $i$.
\vspace{2mm}

\noindent If $\det(d_Pg)=1$, i.e., if $g$ is symplectic, then $$(\lambda^{a(P)},\lambda^{b(P)},\lambda^{c(P)})\in \{(\lambda,\lambda,\lambda),(\lambda^2,\lambda^2,\lambda^2)\}.$$
Denote by $x_1$ the contribution $C(P)$ of a point such that $(\lambda^a(P),\lambda^b(P),\lambda^c(P))=(\lambda,\lambda,\lambda)$ and $x_2$ the contribution of a point of the other type. Call $n_1$ and $n_2$ the number of such points.
Is easy to see that
$$x_1=\frac{1}{(1-\lambda)^3}=\pm\frac{i\sqrt{3}}{9}$$
and that $\bar{x}_1=x_2$.
The Lefschetz fixed point formula is then $0=n_1x_1+n_2\bar{x}_1.$
Being $x_1$ pure imaginary, one has $n_1=n_2$. In particular, $|\Fix(g)|$ is even.
\vspace{2mm}

\noindent If $\det(d_Pg)=\lambda$ then, for each $P\in \Fix(g)$, $(\lambda^{a(P)},\lambda^{b(P)},\lambda^{c(P)})=(\lambda,\lambda,\lambda^2)$
up to permutations. This implies that every point will give a terminal point on the quotient. So, by Theorem $\ref{THM:TERM}$, the fixed points are $9$. The case for which $\det(d_Pg)$ is similar.
\vspace{2mm}

\noindent We have proved the following Proposition:
\begin{prop}
\label{PROP:AUT3ISO}
Let $g$ be an automorphism of order $3$ on a Calabi-Yau threefold $X$. Call $G$ the cyclic group generated by $g$ and assume that it has a finite numeber of fixed points. Then one of the following holds:
\begin{itemize}
\item $g$ is symplectic and $X/G$ is a singular Calabi-Yau threefold with an even number of singular points.
\item $g$ isn't symplectic, $|\Fix(g)|=9$ and $X/G$ has exactly $9$ singular points. All of them are terminal.
\end{itemize}
\end{prop}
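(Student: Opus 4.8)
The plan is to run the case analysis already set up by Lemma \ref{LEM:LTG} and Theorem \ref{THM:TERM}, organized according to the common value of the determinant of $d_Pg$ along $\Fix(g)$. First I would record that, by Lemma \ref{LEM:LTG}, $\det(d_Pg)$ equals $\Tr\left(g^*|_{\H^{0,3}(X)}\right)^{-1}$, which is independent of $P$; call this common cube root of unity $\delta\in\{1,\lambda,\lambda^2\}$, where $\lambda$ is a fixed primitive cube root of unity. Since the fixed points are isolated, every eigenvalue of $d_Pg$ is $\lambda$ or $\lambda^2$, so the unordered eigenvalue triple at $P$ is determined by how many exponents equal $1$ versus $2$, subject to the sum being $\equiv s(P)\pmod 3$. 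Note that $g$ is symplectic precisely when $\delta=1$, so the two bullets of the Proposition correspond to $\delta=1$ and $\delta\in\{\lambda,\lambda^2\}$.

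In the symplectic case $\delta=1$ the exponent sum is $\equiv 0\pmod 3$, and the only triples with entries in $\{1,2\}$ are $(1,1,1)$ and $(2,2,2)$, i.e. eigenvalue triples $(\lambda,\lambda,\lambda)$ and $(\lambda^2,\lambda^2,\lambda^2)$. Let $n_1,n_2$ count the fixed points of these two types and $x_1,x_2$ their Lefschetz contributions. One computes $x_1=(1-\lambda)^{-3}=\pm i\sqrt{3}/9$, which is purely imaginary, and $x_2=\bar x_1=-x_1$ because the second triple is the complex conjugate of the first. Since $g$ is symplectic the right-hand side of (\ref{EQ:HLF}) is $1-1=0$, so $0=n_1x_1+n_2\bar x_1=(n_1-n_2)x_1$, forcing $n_1=n_2$ and hence $|\Fix(g)|=2n_1$ even. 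Moreover $\delta=1\in\SL(T_PX)$ and $\age=1$, so by Theorem \ref{THM:SUNTO} each $\pi(P)$ is a Gorenstein canonical (non-terminal) point, and when $\Fix(g)\neq\emptyset$ the quotient $X/G$ is a singular Calabi-Yau threefold with an even number of singular points.

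In the non-symplectic cases $\delta=\lambda$ (resp. $\delta=\lambda^2$) the exponent sum is $\equiv 1$ (resp. $\equiv 2$) mod $3$, so the only admissible triple with entries in $\{1,2\}$ is $(1,1,2)$ (resp. $(2,2,1)$); the eigenvalue triple at every fixed point is thus $(\lambda,\lambda,\lambda^2)$ (resp. $(\lambda^2,\lambda^2,\lambda)$) up to permutation. In either situation $\det(d_Pg)$ occurs among the eigenvalues, so by Theorem \ref{THM:SUNTO} (equivalently, $\age=4/3>1$ or $5/3>1$) every $\pi(P)$ is terminal. Since the action is small and all singularities of $X/G$ are terminal, Theorem \ref{THM:TERM} with $p=3$ gives $|\Fix(g)|=9$, and the nine fixed points map to nine distinct terminal singular points of $X/G$; and $\delta\neq1$ confirms $g$ is not symplectic. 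Combining the cases yields the stated dichotomy.

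I do not expect a real obstacle: the argument is mostly bookkeeping on the admissible eigenvalue triples of $d_Pg$, with Theorem \ref{THM:TERM} already supplying both the count $9$ and the fact that the nine fixed points give exactly nine singular points. The only computation needing a little care is the contribution $(1-\lambda)^{-3}$: using $(1-\lambda)(1-\lambda^2)=2-(\lambda+\lambda^2)=3$ and $\Arg(1-\lambda)=\mp\pi/6$, one gets $(1-\lambda)^3=\mp 3\sqrt{3}\,i$, hence $x_1=\pm i\sqrt{3}/9$, and this, together with $\bar x_1=-x_1$, is exactly what forces $n_1=n_2$ in the symplectic case.
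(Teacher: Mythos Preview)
Your proof is correct and follows essentially the same route as the paper: split according to the common value of $\det(d_Pg)\in\{1,\lambda,\lambda^2\}$ given by Lemma \ref{LEM:LTG}, list the admissible eigenvalue triples with entries in $\{\lambda,\lambda^2\}$, use the purely imaginary contribution $(1-\lambda)^{-3}$ to force $n_1=n_2$ in the symplectic case, and invoke Theorem \ref{THM:TERM} for the count $9$ in the non-symplectic case. Your write-up is in fact slightly more explicit than the paper's (you spell out the age/Gorenstein check via Theorem \ref{THM:SUNTO} and the fact that nine fixed points give nine distinct singular points), but the argument is the same.
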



\subsection{Automorphism of order five with isolated fixed points}

Consider now an automorphism $g$ of order $5$ such that $\Fix(g)$ has dimension $0$. We have seen that if $g$ is such that $X/<g>$ has terminal singularities, then $|\Fix(G)|=5$. Now we will show that if $g$ is not symplectic (and it has isolated fixed points) then it has at least $5$ fixed points and the minimum is achieved if and only if $X/<g>$ has terminal singularities.
\vspace{4mm}

\noindent Recall that, given an isolated fixed point $P$, we have defined
$$S_n(P)=\sum_{\substack{0\le k_1,k_2,k_3\le p-1 \\ a_1(P)k_1+a_2(P)k_2+a_3(P)k_3\equiv_p n}} k_1k_2k_3$$
and that, if $g\not\in\S(X)$, we have
$$\sum_{x\in\Fix(g)}\left(\frac{p^3(p-1)^3}{8}-pS_0(x)\right)=p^4$$
by Theorem (\ref{THM:CONTI}). If we define
$$A:=\{( 4, 1, 1 ),
    ( 3, 2, 1 ),
    ( 4, 2, 1 ),
    ( 3, 2, 2 ),
    ( 4, 3, 1 ),
    ( 3, 3, 2 ),
    ( 4, 4, 1 ),
    ( 4, 3, 2 )
\}\mbox{ and }$$
$$B:=\{
    ( 2, 2, 2 ),
    ( 4, 4, 3 ),
    ( 3, 3, 1 ),
    ( 4, 4, 4 ),
    ( 1, 1, 1 ),
    ( 4, 2, 2 ),
    ( 2, 1, 1 ),
    ( 3, 3, 3 )
\},$$
these sets correspond to all the possible values for $(a(P),b(P),c(P))$ for the case $g\not\in\S(X)$.
$A$ is precisely the set for $(a(P)+b(P)+c(P) \mod 5) \in \{a(P),b(P),c(P)\}$, i.e. the set corresponding to $P$ that gives terminal singularities on $X/<g>$.
By direct inspection, we see that $S_0(P)$ is equal to $175$ if and only if $(a(P),b(P),c(P))\in A$. The values that $S_0(P)$ can assume in the other case are $200$ and $225$.
Call $n,q_1,q_2$ the number of points for which $S_0(P)$ is equal respectively to $175,200$ and $225$. In particular, $X/<g>$ has $n+q_1+q_2$ singular points and exactly $n$ are terminal.
The relation in Theorem (\ref{THM:CONTI}) is then
$$p^4=\sum_{x\in\Fix(g)}\left(\frac{p^3(p-1)^3}{8}-pS_0(x)\right)=$$
$$=n\left(\frac{p^3(p-1)^3}{8}-175p\right)+q_1\left(\frac{p^3(p-1)^3}{8}-200p\right)+q_2\left(\frac{p^3(p-1)^3}{8}-225p\right)=$$
$$=(n+q_1+q_2)\left(\frac{p^3(p-1)^3}{8}-175p\right)-25pq_1-50pq_2=(n+q_1+q_2)5^3-5^3q_1-2\cdot 5^3q_2$$
that is
$$5=(n+q_1+q_2)-q_1-2q_2=n-q_2 \Longrightarrow n=5+q_2.$$
This implies that the number of fixed points is $5+q_1+2q_2$. In particular it is at least $5$ and, moreover, it is equal to $5$ if and only if $q_1=q_2=0$.

\begin{prop}
\label{PROP:AUT5ISO}
Let $g$ be a non symplectic automorphism of order $5$ on a Calabi-Yau threefold $X$. Assume that it has a finite number of fixed points and call $G$ the cyclic group generated by $g$. Then one of the following holds:
\begin{itemize}
\item $|\Fix(G)|=5$ and $X/G$ has only terminal singularities.
\item $|\Fix(G)|>5$, $X/G$ has $5$ or more terminal singularities and at least another fixed point.
\end{itemize}
\end{prop}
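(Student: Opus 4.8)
The plan is to read off the two alternatives directly from the counting identity established in the discussion preceding the statement. I keep the notation $n$, $q_1$, $q_2$ for the number of fixed points $P$ of $g$ with $S_0(P)$ equal to $175$, $200$, $225$ respectively. First I would nail down the one genuinely computational input: by Lemma~\ref{LEM:CONTOSOB}, evaluated at $p=5$ where its right-hand side is $175$, together with a direct evaluation of $S_0$ on the triples of $B$, every fixed point falls into exactly one of these three classes, and $S_0(P)=175$ holds \emph{precisely} for those $P$ whose eigenvalue triple lies in $A$ — that is, by Theorem~\ref{THM:SUNTO}, precisely for the $P$ giving a terminal point of $X/G$.

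Next I would invoke Theorem~\ref{THM:CONTI}: since $g$ is not symplectic we are in the case $r\neq0$, so $\sum_{x\in\Fix(g)}\bigl(\tfrac{p^3(p-1)^3}{8}-pS_0(x)\bigr)=p^4$. Substituting the three possible values of $S_0$ and cancelling the common factor $\tfrac{p^3(p-1)^3}{8}-175p=5^3$ from both sides reduces this to $5=n-q_2$, i.e. $n=5+q_2$. Consequently $|\Fix(G)|=n+q_1+q_2=5+q_1+2q_2$, while the number of terminal singular points of $X/G$ equals $n=5+q_2\geq5$.

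Finally I would split into the two cases of the statement. If $|\Fix(G)|=5$ then $q_1+2q_2=0$, forcing $q_1=q_2=0$ and hence $n=5$; all five fixed points give terminal singularities, which is the first alternative. If instead $|\Fix(G)|>5$ then $q_1+2q_2\geq1$, which forces $q_1+q_2\geq1$; thus $X/G$ has $n=5+q_2\geq5$ terminal singular points (those whose triple lies in $A$) together with at least one further fixed point, whose triple lies in $B$ and which therefore does not give a terminal singularity. This is the second alternative, and the two exhaust the possibilities.

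I do not expect a real obstacle: all the substance is in the evaluation of $S_0$ on the triples of $A\cup B$ and in the already-established identity of Theorem~\ref{THM:CONTI}, so the proof itself is just the case analysis on $|\Fix(G)|$ above. The one place to be careful is the arithmetic in the cancellation — at $p=5$ one has $\tfrac{p^3(p-1)^3}{8}-175p=125$, $25p=125$, $50p=250$ — which is exactly what makes the final relation come out as cleanly as $n=5+q_2$.
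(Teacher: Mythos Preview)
Your proposal is correct and follows essentially the same route as the paper: the discussion immediately preceding the proposition already carries out exactly this computation, classifying the local types into the sets $A$ and $B$, evaluating $S_0$ on each to obtain $175$, $200$, $225$, and deducing $n=5+q_2$ and $|\Fix(G)|=5+q_1+2q_2$ from Theorem~\ref{THM:CONTI}. Your invocation of Lemma~\ref{LEM:CONTOSOB} to pin down the value $175$ in the terminal case is a nice touch (the paper just says ``by direct inspection''), and your final case split is spelled out a bit more explicitly than in the paper, but the argument is the same.
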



\section{Some examples}
\label{SE:EX}

\subsection{Quotient with terminal singularities}
\noindent Here we will construct quotients of Calabi-Yau threefolds with only terminal singularities with respect to $\Z_3,\Z_5,\Z_2$ and $\Z_2\oplus\Z_2$.

\begin{ex}
Let $X$ be $\P^2\times \P^2$ with projective coordinates $x_i$ and $y_i$ on the two factors of $X$. Consider the automorphism of $X$ defined by
$$g((x_0:x_1:x_2)\times(y_0:y_1:y_2)):=(x_0:x_1:\lambda x_2)\times(y_0:\lambda y_1:\lambda^2 y_2)$$
with $\lambda$ a primitive root of unity of order $3$.
It is easy to see that $g$ has order $3$ and that its fixed locus has $6$ irreducible components, three of which are rational curves. More precisely, if $S:=\{(1:0:0),(0:1:0),(0:0:1)\}$, then
$$\Fix(g)=(\{(x_0:x_1:0) | (x_0:x_1)\in\P^1\}\cup\{(0:0:1)\})\times S.$$
If $G:=<g>$, define $V$ to be the vector space $\H^0(X,-K_X)^{G}$ of all $G-$invariants anticanonical sections. By direct computations, we see that the generic element of $V$ is smooth (because $V$ has empty base locus) and doesn't vanish on any of isolated points of $\Fix(g)$. The generic element gives an invariant Calabi-Yau $Y$ that intersect only the components of $\Fix(g)$ having dimension $1$. For $Y$ generic, none of these three curves is contained in $Y$, so this implies that $Y$ meets each one in exactly $3$ points (by simple calculation of intersection theory). In total there are $9$ fixed points of $g$ that are on $Y$. By our discussion on the number of fixed points of an automorphism of order three on a Calabi-Yau threefold, we can conclude that $Y/G=Y/\Z_3$ has exactly $9$ singular points and that each of them are terminal.
\end{ex}

\begin{ex}
Let $X$ be $\P^4$ with projective coordinates $x_i$. Take $g$ to be the automorphism of $X$ defined by
$$g((x_0:x_1:x_2:x_3:x_4):=(x_0:x_1:\lambda x_2:\lambda^2 x_3:\lambda^3 x_4).$$
with $\lambda^5=1$ and primitive. Call $G$ the group generated by $g$ and $Y$ the Fermat hypersurface of degree five in $\P^4$.
$Y$ is a smooth Calabi-Yau threefold and is easily seen to be invariant with respect to $G$. The fixed locus of $g$ on $X$ is
$$\Fix(g)=\{(x_0:x_1:0:0:0) | (x_0:x_1)\in\P^1\}\cup$$
$$\cup \{(0:0:1:0:0),(0:0:0:1:0),(0:0:0:0:1)\}$$
and $Y$ doesn't meet the isolated points of $\Fix(g)$. The intersection of $Y$ with $\Fix(g)$ are $5$ isolated points.
If one consider the quotient $Y/G$, because $G$ is small, one has that $\Sing(Y/G)=\Fix(g)$. In particular $Y/G=Y/\Z_5$ has exactly $5$ isolated fixed points and each of them is terminal.
\end{ex}

\begin{ex}
\noindent Set $X=\P^1\times \P^1\times \P^1\times \P^1$ with $(x_i:y_i)$ that are projective coordinates on the $i-$th $\P^1$. In \cite{BF} and \cite{BFNP} the authors study the automorphisms of Calabi-Yau manifolds embedded in $X$ that have empty fixed locus. The authors produce a classification of all the admissible pairs in $X$, i.e. the pairs $(Y,G)$ where $Y$ is a Calabi-Yau threefold and $G$ is a finite group of authomorphisms of $X$ that stabilizes $Y$ and acts freely on $Y$. Here we will show that one can easily constuct examples with a different kind of fixed points.
\vspace{2mm}

\noindent Every $g\in \Aut(X)$ acts on the $4$ factors (see, for instance, \cite{BF}) giving a  surjective homomorphism
$\pi\colon \Aut(X) \rightarrow S_4$ with kernel $\PGL(2)^{\times 4}$. On the other hand the permutations of the 
factors give an inclusion $S_4 \hookrightarrow \Aut(X)$ splitting $\pi$ and therefore giving a structure of semidirect product
$$
\Aut(X) \cong S_4 \ltimes \PGL(2)^{\times 4} 
$$
Concretely this gives, $\forall g \in \Aut X$, a unique decomposition $g=(A_i)\circ \sigma$ where $\sigma=\pi(g)$ and  
$(A_i)=(A_1,A_2,A_3,A_4)\in \PGL(2)^{\times 4}$. Denote with $A$ and $B$ the automorphisms of $\P^1$ that send $(x_1:y_1)$ respectively in
$(x_1:-y_1)$ and $(y_1:x_1)$.
\vspace{2mm}

\noindent Call $g:=(\id,\id,A,A)\circ (12)$ and $h:=(A,A,\id,\id)\circ (34)$. It is easy to see that $G:=<g,h>\simeq \Z_2\oplus\Z_2$. As an automorphism of $X$, $g$ have fixed locus composed of $4$ rational curves. These are 
$$\{(P,P,Q_1,Q_2) \,|\, P\in\P^1,\, Q_1,Q_2\in\{(1:0),(0:1)\}\}.$$
Call $C$ a component of $\Fix(g)$ and consider a generic element $Y\in |-K_X|$.
It is easy to see that $Y\cdot C =4$ so generically one expect that $Y\cap C$ has $4$ fixed points. A similar result holds for $h$. For $gh=(A,A,A,A)\circ (12)(34)$ 
one can see that $\Fix(gh)$ is composed of isolated points so generically one expect that the general member of $|-K_X|$ doesn't meet $\Fix(gh)$.
Call $V$ the vector space $\H^0(X,-K_X)^{G}$ of all $G-$invariants anticanonical sections. By direct computations one see that $V$ has empty base locus so the generic element is a smooth Calabi-Yau that admits an action of $G$.
\vspace{2mm}

\noindent We point out that the Calabi-Yau $Y$ constructed as zero locus of the generic section of $V$ doesn't contain $Fix(g)$. Then $\Fix(g)$ meets $Y$ at isolated points. By Theorem \ref{THM:TERM}, $g$ is an involution with exactly $16$ fixed points and acts as $-1$ on $\H^{0,3}(X)$. The same is true for $h$ and $gh$ which have to act as $\id$ on $\H^{0,3}(X)$. Because $gh$ is an involution and $(gh)^*|_{H^{0,3}(X)}=\id$ one has that $\Fix(gh)$ is either empty or it has pure dimension $1$. The latter cannot occour because $gh$ has a finite number of fixed points on $X$ so $gh$ acts freely on $Y$.
\vspace{2mm}

\noindent By direct computation the common fixed points of $g$ and $h$ on $X$ are $4$, namely:
$$\{(P,P,Q,Q) \,|\, P,Q\in\{(1:0),(0:1)\}\}.$$
The generic invariant section isn't zero on any of these point so, for the generic invariant Calabi-Yau $Y$, the fixed locus of $g$ and the one of $h$ are disjoint.
\vspace{2mm}

\noindent To conclude, for $Y$ generic, we have given four quotients, namely: $Z_1=Y/<g>$, $Z_2=Y/<h>$, $Z_3=Y/<gh>$ and $Z_4=Y/G=Y/(\Z_2\oplus\Z_2)$. $Z_3$ is smooth because $gh$ acts freely on $Y$; thus the map $Y\rightarrow Y/<gh>$ is an \'etale cover of degree $2$. $Z_1$, $Z_2$ and $Z_4$ have only terminal singularities by construction. $Z_1$ and $Z_2$ have exactly $16$ singular points. To compute the number of singular points of $Z_4$ we can use Burnside's Lemma on the action of $G$ restricted on $\Fix(G)$. In fact $\Sing(Y/G)=\Fix(G)/G$ because $G$ is small. The formula for this case is
$$|\Sing(Y/G)|=|\Fix(G)/G|=\frac{1}{|G|}\sum_{g \in G}|\Fix(g)|=$$
$$=\frac{1}{|G|}(\Fix(\id)+\Fix(g)+\Fix(h)+\Fix(gh))=\frac{1}{4}\left(32+16+16+0\right)=16.$$
In conclusion $Z_4$, like $Z_1$ and $Z_2$, has exactly $16$ isolated and terminal singularities.
\vspace{2mm}

\noindent Another way to see this fact is to note that $<gh>$ is normal in $G$ with quotient generated by an involution which we will denote $\hat{g}$. We can write the quotient $Z_4=Y/G$ as $(Y/<gh>)/(G/<gh>)=Z_3/(G/<gh>)$. Hence, $Y/G$ can be seen as the quotient of a smooth Calabi-Yau threefold by the single involution $\hat{g}$, an involution that has isolated fixed points. By our classification of fixed locus of involutions on Calabi-Yau manifolds, $\hat{g}$ has exactly $16$ fixed points. Since we have
$$\Sing(Y/G)=\Fix(\hat{g})/<\hat{g}>=\Fix(\hat{k}),$$
we obtain again that $Z_4$ has $16$ fixed points.

\end{ex}

\subsection{A quotient with non-isolated Gorenstein singularities}
Here we will construct a quotient of a Calabi-Yau by a group of order $32$ that is contained in $S(X)$.

\begin{ex}
\noindent Use the same notation introduced in the last example. Call 
$$g:=(\id,\id,\id,A)\circ (1324),h:=(B,B,B,B), k:=(14)(23)$$ and consider the group $G:=<g,h,k>$.
$g,h$ and $k$ satisfy the following relations
$$g^8=h^2=k^2=1\quad gh=hg\quad kh=hk\quad gk=kg^{-1}.$$
Using these relations it is easy to see that $G$ has $32$ elements and that $G$ is isomorphic to $D_{16}\times \Z_2$ where $D_{16}=<g,k>$ is the dihedral group with $16$ elements and $\Z_2=<h>$. The elements of $G$ can be written uniquely as $g^ah^bk^c$ with $0\leq a\leq 7$ and $0\leq b,c\leq 1$. All the elements with $c=1$ are involutions.
Considering all the elements as automorphisms acting on $X$, $\Fix(g^ah^b)$ has a finite number of elements whereas $\Fix(k)$ has pure dimension $2$ and the same is true for  $\Fix(g^ah^bk)$. 
If we denote by $V:=\H^0(X,K_X)^G$ it can be seen that the Calabi-Yau $Y$ given by the generic $s\in V$, i.e., the generic $G-$invariant Calabi-Yau is smooth and satisfies
$$\Fix(g^ah^b)\cap Y = \emptyset\quad\quad \Dim(\Fix(k)\cap Y)=1$$
for all $a,b$. the involution $k$ acts on $Y$ with a fixed locus of dimension $1$. By Proposition \ref{PROP:INVCLASS} this implies that $k\in\S(Y)$. The two elements $g$ and $h$ do not have fixed points on $Y$ so they are elements of $\S(Y)$. This is enough to conclude that $G\leq \Gor(Y)$. It can be shown that $\Aut(Y)=\S(Y)$ for the generic $G-$invariant Calabi-Yau threefold.
\vspace{2mm}

\noindent By the Moishezon-Nakai criterion, being $Y \in |-K_X|$ ample, $Y$ cannot be disjoint from $\Fix(g^ah^bk)$ that has dimension $2$ on $X$. We know that $g^ah^bk$ is a symplectic involution so we conclude that $\Fix(g^ah^bk)\cap Y$ is smooth of pure dimension $1$ for each $a,b$.


All the irreducible components of $\Fix(G)$ are obtained from the irreducible components of $\Fix(k), \Fix(gk), \Fix(hk)$ and $\Fix(ghk)$ using $\Fix(b^{-1}ab)=b(\Fix(a))$ and $x^{-1}(g^ah^bk)x=g^{a+2d}h^bk$ for $x\in G$. Using MAGMA, it is possible to check that there are at least $32$ components and some of them are rational curves. Starting from these remarks we can argue that there are at least $4$ orbits for the action of $G$ on $\Fix(G)$.
\vspace{2mm}

\noindent It is interesting to note that $H:=<g,h>\simeq \Z_8\times \Z_2$ is a subgroup of $G$ of index $2$ and thus is a normal subgroup of $G$. The quotient $Y/G$ can be viewed as $(Y/H)/(G/H)$. It is interesting to write the quotient like this because $G/H\simeq \Z_2$ and it is generated by an involution which we will denote $\hat{k}$. Moreover $\Fix(H)$ is empty (the pair $(Y,H)$ is one of the admissible pairs studied in \cite{BFNP}) so $Y\rightarrow Y/H$ is an \`etale cover of degree $16$ and $Y/H$ is again a smooth Calabi-Yau threefold. Hence $Y/G$ may be viewed as the quotient of a smooth Calabi-Yau threefold by the single involution $\hat{k}$. The fixed locus of $\hat{k}$ satisfies
$\Fix(\hat{k})=\Fix(\hat{k})/<k>=\Sing(Y/G)=\Fix(G)/G$, so the singular locus of $Y/G$ has at least $4$ irreducible components and at least one of them is a rational curve. 
\vspace{2mm}

\end{ex}


\end{document}